\newtheorem{theorem}{Theorem}[section]
\newtheorem{lemma}[theorem]{Lemma}
\newtheorem{corollary}[theorem]{Corollary}
\newtheorem{proposition}[theorem]{Proposition}
\newtheorem{remark}[theorem]{Remark}
\author{Selim Bahad\i r\affiliationmark{1}
  \and Didem G\"{o}z\"{u}pek\affiliationmark{2}}
\title[Graphs with Large Total Domination Number]{On a Class of Graphs with Large Total Domination Number \thanks{This work is supported by the Scientific and Technological Research
Council of Turkey (TUBITAK) under grant no. 114E731.}}
\affiliation{
  % one line per affiliation, no postal codes, grant numbers or similar
Department of Mathematics-Computer, Ankara Y\i ld\i r\i m Beyaz\i t University, Turkey\\
Department of Computer Engineering, Gebze Technical University, Turkey}
\keywords{domination number, total domination number}
\begin{document}
\publicationdetails{20}{2018}{1}{23}{3877}
\maketitle
\begin{abstract}
Let $\gamma(G)$ and $\gamma_t(G)$ denote the domination number and the total domination number, respectively, of a graph $G$ with no isolated vertices.
It is well-known that $\gamma_t(G) \leq 2\gamma(G)$.
We provide a characterization of a large family of graphs (including chordal graphs) satisfying $\gamma_t(G)= 2\gamma(G)$,
strictly generalizing the results of Henning (2001) and Hou and Xu (2010),
and partially answering an open question of Henning (2009).
\end{abstract}

\section{Introduction}
\label{sec:intro}
Let $G$ be a simple graph with vertex set $V(G)$ and edge set $E(G)$.
The neighborhood of a vertex $v\in V(G)$, denoted by $N(v)$, is the set of vertices adjacent to $v$.
For any subset $S\subseteq V(G)$, the neighborhood of $S$ is $\cup _{v\in S}N(v)$ and is denoted by $N(S)$.
The closed neighborhood of a subset $S\subseteq V(G)$, denoted by $N[S]$, is $N(S)\cup S$.
In particular, the closed neighborhood of a vertex $v$ is denoted by $N[v]$.

A set $S\subseteq V(G)$ of vertices is called a \emph{dominating set} of $G$ if every vertex of $V(G)\backslash S$ is adjacent to a member of $S$, that is, $N[S]=V(G)$.
The \emph{domination number} $\gamma(G)$ is the minimum cardinality of a dominating set of $G$.
If $G$ has no isolated vertices, a subset $S\subseteq V(G)$ is called a \emph{total dominating set} of $G$ if every vertex of $V(G)$ is adjacent to a member of $S$, i.e., $N(S)=V(G)$.
In other words, $S$ is a total dominating set if $S$ is a dominating set and the subgraph of $G$ induced by $S$ has no isolated vertices.
The \emph{total domination number} of $G$ with no isolated vertices, denoted by $\gamma_t(G)$, is the minimum size of a total dominating set of $G$.
A minimum dominating set is called  a $\gamma$-set of $G$ and a minimum total dominating set is called a $\gamma_t$-set of $G$.

Obtaining bounds on total domination number in terms of other graph parameters and classifying graphs whose total domination number attains an upper or lower bound are studied by many authors (see, Chapter 2 in \cite{yeo2013}).
For example, \cite{cockayne1980} showed that if $G$ is a connected graph with order at least 3, then $\gamma_t(G)\leq 2|V(G)|/3$.
Moreover, \cite{brigham2000} proved that a connected graph $G$ satisfies
$\gamma_t(G)= 2|V(G)|/3$ if and only if $G$ is a cycle of length 3 or 6, or $H\circ P_2$ for some connected graph $H$,
where $P_2$ is a path of length 2 and $H\circ P_2$ is obtained by identifying each vertex of $H$ by an end vertex of a copy of $P_2$.

As every $\gamma_t$-set is a dominating set as well, we have $\gamma(G)\leq \gamma_t(G)$.
For any $\gamma$-set $S$, one can extend $S$ to a total dominating set of cardinality at most $2\gamma(G)$ by including a neighbor of each vertex of $S$,
and therefore we get $\gamma(G) \leq \gamma_t(G)\leq 2\gamma(G)$, which is first observed by \cite{bollobas1979}.
In this paper, motivated by an open problem in \cite{henning2009},
we study graphs satisfying the upper bound for total domination number, $\gamma_t(G)= 2\gamma(G)$, and
refer to them as $(\gamma_t,2\gamma)$-graphs.

\cite{henning2001} provided a constructive characterization of $(\gamma_t,2\gamma)$-trees, whereas
\cite{hou2010} generalized it to block graphs and gave a characterization of $(\gamma_t,2\gamma)$-block graphs.
We extend the results in \cite{hou2010} to a larger family of graphs and partially solve the open problem (characterizing all $(\gamma_t,2\gamma)$-graphs) in \cite{henning2009}.

The rest of this paper is organized as follows:
Section \ref{sec:mainres} provides the main theorem and its applications.
The proof of the main theorem is given in Section \ref{sec:proof}.
Section \ref{sec:relwork} presents previous results on $(\gamma_t,2\gamma)$-graphs and their verifications by using our main theorem.
Discussion and conclusions are provided in Section \ref{sec:dis}.

\section{Main Results}
\label{sec:mainres}

We first provide some definitions required for the statement of the main theorem.
Two vertices $u$ and $v$ in $G$ are called \emph{true twins} whenever $N[u]=N[v]$,
i.e., in a pair of vertices a vertex is a true twin of the other one if they have the same closed neighborhood.
For each vertex $v$, we partition $N[v]$ into three sets, namely $T(v), D(v)$ and $M(v)$.
$T(v)$ consists of $v$ and its true twins.
A neighbor $u$ of $v$ is in $D(v)$ if $N[u]$ is a proper subset of $N[v]$.
That is, $u\in D(v)$ if and only if $u$ and $v$ are adjacent, $u$ is not a true twin of $v$, and every neighbor of $u$ other than $v$ is also a neighbor of $v$.
All other neighbors of $v$ are in $M(v)$, i.e., a neighbor of $v$ is in $M(v)$ if and only if it has a neighbor which is not adjacent to $v$.

We say that a vertex $v$ is \emph{special} if there is no $u \in M(v)$ such that $D(v) \subseteq N(u)$.
Isolated vertices are considered to be non special.
%In other words, a vertex $v$ is special if and only if the set of vertices adjacent to all members of $D(v)$ is $T(v)$.
Note that if $D(v)=\emptyset$ and $M(v)\neq \emptyset$, then $v$ is not special.
Moreover, notice that if a vertex is special, then all of its true twins are special as well.
See Figure \ref{fig:specialex} for an example of a special vertex.

\begin{figure}\center
\begin{tikzpicture}[line cap=round,line join=round,>=triangle 45,x=1.07cm,y=1.07cm]
\clip(0.5,0.5) rectangle (5.5,3.0);
\draw (2.,2.5)-- (3.,2.5);
\draw (2.,2.5)-- (1.,1.);
\draw (2.,2.5)-- (2.,1.5);
\draw (2.,2.5)-- (3.,1.5);
\draw (2.,2.5)-- (4.,1.);
\draw (1.,1.)-- (2.,1.5);
\draw (2.,1.5)-- (3.,1.5);
\draw (3.,1.5)-- (4.,2.5);
\draw (4.,1.)-- (5.,2.5);
\draw (3.,2.5)-- (1.,1.);
\draw (3.,2.5)-- (2.,1.5);
\draw (3.,2.5)-- (3.,1.5);
\draw (3.,2.5)-- (4.,1.);
\draw (4.,2.5)-- (5.,2.5);
\draw (1.,1.)-- (4.,1.);
\draw (1.75,2.95) node[anchor=north west] {$v_1$};
\draw (2.75,2.95) node[anchor=north west] {$v_2$};
\draw (0.75,1.0) node[anchor=north west] {$v_3$};
\draw (1.75,1.5) node[anchor=north west] {$v_4$};
\draw (2.75,1.5) node[anchor=north west] {$v_5$};
\draw (3.75,1.0) node[anchor=north west] {$v_6$};
\draw (3.75,2.95) node[anchor=north west] {$v_7$};
\draw (4.75,2.95) node[anchor=north west] {$v_8$};
\begin{scriptsize}
\draw [fill=black] (2.,2.5) circle (1.5pt);
\draw [fill=black] (4.,2.5) circle (1.5pt);
\draw [fill=black] (3.,2.5) circle (1.5pt);
\draw [fill=black] (1.,1.) circle (1.5pt);
\draw [fill=black] (2.,1.5) circle (1.5pt);
\draw [fill=black] (3.,1.5) circle (1.5pt);
\draw [fill=black] (4.,1.) circle (1.5pt);
\draw [fill=black] (5.,2.5) circle (1.5pt);
\end{scriptsize}
\end{tikzpicture}
\caption{In the given graph,
$N[v_1]=\{v_1,v_2,v_3,v_4,v_5,v_6\}$, $T(v_1)=\{v_1,v_2\}$, $D(v_1)=\{v_3,v_4\}$ and $M(v_1)=\{v_5,v_6\}$.
Since none of $N(v_5)=\{v_1,v_2,v_4,v_7\}$ and $N(v_6)=\{v_1,v_2,v_3,v_8\}$ includes $D(v_1)=\{v_3,v_4\}$,
we see that $v_1$ is a special vertex.
Note also that $v_1$ and $v_2$ are the only special vertices in this graph.
}
\label{fig:specialex}
\end{figure}
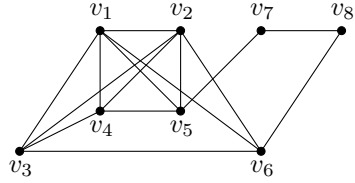

Partition the set of special vertices of $G$ in such a way that two vertices are in the same part if and only if they are true twins.
A set obtained by picking exactly one vertex from each part is called an \emph{$S(G)$-set}.
Hence, for any special vertex $v$, every $S(G)$-set contains exactly one element from $T(v)$.

A graph is called $(G_1,\dots, G_k)$\emph{-free} if it contains none of $G_1,\dots,G_k$ as an induced subgraph.
Let $H_1$ and $H_2$ be the graphs shown in Figure \ref{fig:H1H2}
and $C_k$ denote a cycle of length $k$.
A subset $S\subseteq V(G)$ is called a \emph{packing} in $G$ if $N[u]\cap N[v] =\emptyset$ for every distinct vertices $u$ and $v$ in $S$.

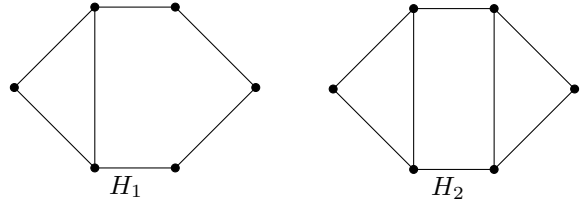
\begin{figure}\center
\begin{tikzpicture}[line cap=round,line join=round,>=triangle 45,x=1.07cm,y=1.07cm]
\clip(0.76,0.42) rectangle (8.38,3.4);
\draw (1.0385444216967172,2.0184934217695405)-- (2.0385444216967183,3.0184934217695405);
\draw (2.0385444216967183,3.0184934217695405)-- (3.0385444216967197,3.0184934217695405);
\draw (3.0385444216967197,3.0184934217695405)-- (4.03854442169672,2.0184934217695405);
\draw (4.03854442169672,2.0184934217695405)-- (3.0385444216967197,1.0184934217695407);
\draw (3.0385444216967197,1.0184934217695407)-- (2.0385444216967183,1.0184934217695407);
\draw (2.0385444216967183,1.0184934217695407)-- (1.0385444216967172,2.0184934217695405);
\draw (2.0385444216967183,1.0184934217695407)-- (2.0385444216967183,3.0184934217695405);
\draw (5.,2.)-- (6.,3.);
\draw (6.,3.)-- (7.,3.);
\draw (7.,3.)-- (8.,2.);
\draw (8.,2.)-- (7.,1.);
\draw (7.,1.)-- (6.,1.);
\draw (6.,1.)-- (5.,2.);
\draw (6.,1.)-- (6.,3.);
\draw (7.,1.)-- (7.,3.);
\draw (2.1,1.04) node[anchor=north west] {$H_1$};
\draw (6.1,1.02) node[anchor=north west] {$H_2$};
\begin{scriptsize}
\draw [fill=black] (1.0385444216967172,2.0184934217695405) circle (1.5pt);
\draw [fill=black] (2.0385444216967183,1.0184934217695407) circle (1.5pt);
\draw [fill=black] (2.0385444216967183,3.0184934217695405) circle (1.5pt);
\draw [fill=black] (3.0385444216967197,3.0184934217695405) circle (1.5pt);
\draw [fill=black] (3.0385444216967197,1.0184934217695407) circle (1.5pt);
\draw [fill=black] (4.03854442169672,2.0184934217695405) circle (1.5pt);
\draw [fill=black] (5.,2.) circle (1.5pt);
\draw [fill=black] (6.,3.) circle (1.5pt);
\draw [fill=black] (7.,3.) circle (1.5pt);
\draw [fill=black] (8.,2.) circle (1.5pt);
\draw [fill=black] (7.,1.) circle (1.5pt);
\draw [fill=black] (6.,1.) circle (1.5pt);
\end{scriptsize}
\end{tikzpicture}
\caption{The graphs $H_1$ and $H_2$.
}
\label{fig:H1H2}
\end{figure}

\begin{theorem}[Main Theorem]
\label{thm:main}
Let $G$ be an $(H_1,H_2,C_6)$-free graph and $S$ be an $S(G)$-set.
Then,
$G$ is a $(\gamma_t,2\gamma)$-graph if and only if $S$ is both a packing and a dominating set of $G$.
\end{theorem}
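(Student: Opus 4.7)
The plan is to prove the two implications separately. Sufficiency is direct and does not require the forbidden-subgraph hypothesis, whereas necessity is the substantive part of the proof, where that hypothesis is essential.

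For sufficiency, I assume $S$ is both a packing and a dominating set, so the closed neighborhoods $\{N[v]:v\in S\}$ partition $V(G)$. It suffices to establish the claim that every total dominating set $T$ of $G$ satisfies $|T\cap N[v]|\geq 2$ for every $v\in S$, since summing over $v\in S$ then yields $\gamma_t(G)\geq 2|S|\geq 2\gamma(G)$, and the reverse inequality is standard. To prove the claim, fix $v\in S$ and suppose $|T\cap N[v]|\leq 1$. Total domination of the non-isolated vertex $v$ forces $|T\cap N[v]|=1$, say $T\cap N[v]=\{u\}$ with $u\in N(v)$. If $u\in T(v)\setminus\{v\}$ or $u\in D(v)$, then $N(u)\subseteq N[v]\setminus\{u\}$, so $u$ itself has no neighbor in $T$, a contradiction. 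Hence $u\in M(v)$, which in turn forces $D(v)\neq\emptyset$ (otherwise specialness of $v$ would require $M(v)=\emptyset$ and this case would be impossible). But then every $w\in D(v)$ has $N(w)\subseteq N[v]$, so its totally-dominating neighbor in $T$ must be $u$, giving $D(v)\subseteq N(u)$ with $u\in M(v)$, in direct contradiction to the specialness of $v$.

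For necessity, I assume $\gamma_t(G)=2\gamma(G)$ and $G$ is $(H_1,H_2,C_6)$-free, and suppose for contradiction that $S$ is not a packing or not a dominating set. The aim is to exhibit a total dominating set of size strictly less than $2\gamma(G)$, which contradicts the hypothesis. If $S$ fails to dominate, pick $w\notin N[S]$; then $w$ and each of its neighbors is non-special, and each such vertex admits a witness $u\in M(\cdot)$ with $D(\cdot)\subseteq N(u)$. I would use these witnesses to collapse the naive size-$2\gamma$ TDS obtained by pairing a $\gamma$-set with neighbors, saving at least one vertex near $w$. If $S$ fails to be a packing, fix distinct $u,v\in S$ with a common element in their closed neighborhoods and perform a case analysis on the partitions $T(\cdot),D(\cdot),M(\cdot)$ at $u$ and $v$, using specialness together with the overlap to replace two dominating pairings by fewer vertices. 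In each subcase, either this reduction succeeds, or an obstructing configuration arises that must then be shown to induce a copy of $H_1$, $H_2$, or $C_6$.

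The main obstacle is the case analysis in the necessity direction: overlapping closed neighborhoods of two special vertices produce many branches according to how the classes $T(\cdot),D(\cdot),M(\cdot)$ at $u$ and $v$ interact, and each blocked reduction has to be traced back to an explicit occurrence of $H_1$, $H_2$, or $C_6$. I expect this combinatorial bookkeeping to consume the bulk of the subsequent proof section.
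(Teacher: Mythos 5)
Your sufficiency argument is correct and is essentially the paper's Lemma~\ref{lem:sgpacdom}: since $S$ is a packing and a dominating set, the closed neighborhoods $N[v]$, $v\in S$, partition $V(G)$, and specialness of each $v\in S$ forces every total dominating set to meet each $N[v]$ at least twice. That half needs no further work.

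The necessity direction, however, is not a proof but a plan, and the plan as stated has a genuine gap. You propose to assume $S$ fails to be a packing or a dominating set and then ``collapse'' a naive total dominating set built from a $\gamma$-set, handling obstructions by locating an induced $H_1$, $H_2$, or $C_6$. But the connection between the failure of the \emph{special-vertex} set $S$ and the structure of a \emph{minimum dominating} set is precisely the missing content: a vertex $w\notin N[S]$ tells you only that $w$ and its neighbors are non-special, and it is not clear how the witnesses $u\in M(\cdot)$ with $D(\cdot)\subseteq N(u)$ for those vertices let you shave a vertex off a TDS anchored at a $\gamma$-set that may sit elsewhere in the graph. Likewise, the case analysis on two special vertices with overlapping closed neighborhoods is not obviously finite or tractable, and you have not exhibited a single completed subcase. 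The paper's actual route is different and sharper: take a $\gamma$-set $\{v_1,\dots,v_k\}$ (which by Lemma~\ref{lem:dompack} is a packing, so the $N[v_i]$ partition $V(G)$) and prove that the set of special vertices is exactly $\cup_{i=1}^k T(v_i)$. The heart of that proof is: if $v_1$ were not special, one chooses for each $i\geq 2$ a neighbor $w_i$ of $v_i$ maximizing $|N(w_i)\cap N(v_1)|$, and the forbidden subgraphs $H_1$, $H_2$, $C_6$ are used exactly once, to show that every vertex of $M(v_1)$ with a neighbor in $N(v_i)$ is adjacent to $w_i$; this yields a total dominating set $\{u,v_2,w_2,\dots,v_k,w_k\}$ of size $2k-1$, a contradiction. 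The converse inclusion (no vertex of $D(v_1)\cup M(v_1)$ is special) is then an easy neighborhood computation. Your proposal never isolates this structural identification, which is what makes the $S(G)$-set automatically inherit the packing and domination properties of the $\gamma$-set; without it, the ``combinatorial bookkeeping'' you defer is the entire theorem.
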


Since a chordal graph has no $H_1,H_2$ and $C_6$ as induced subgraphs,
we have the following result as a corollary of Theorem \ref{thm:main}.
\begin{corollary}\label{cor:chordal}
Let $G$ be a chordal graph and $S$ be an $S(G)$-set.
Then, $\gamma_t (G)=2\gamma(G)$ if and only if $S$ is a packing and a dominating set of $G$.
\end{corollary}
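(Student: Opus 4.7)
The plan is to derive the corollary as a direct specialization of Theorem \ref{thm:main}; the only work is to show that every chordal graph lies in the class of $(H_1,H_2,C_6)$-free graphs, after which the claimed equivalence is exactly what Theorem \ref{thm:main} asserts.

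Recall that a graph is chordal precisely when it contains no induced cycle of length four or more. Consequently $C_6$ itself is not chordal, so no chordal graph contains $C_6$ as an induced subgraph. It therefore suffices to exhibit, in each of $H_1$ and $H_2$, an induced cycle of length at least four; the same argument then forbids $H_1$ and $H_2$ from appearing as induced subgraphs of any chordal graph.

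I would read the required cycles directly off Figure \ref{fig:H1H2}. In $H_1$, deleting the unique degree-two vertex on the left leaves five vertices whose only edges are the four outer-hexagon edges not incident to the removed vertex together with the single chord of $H_1$; these five edges form an induced $5$-cycle. In $H_2$, the four degree-three vertices, namely the endpoints of the two vertical chords, induce a $4$-cycle consisting of those two chords together with the top and bottom edges of the outer hexagon (the two ``diagonal'' non-edges are absent by inspection).

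With these two inclusions in hand, any induced copy of $H_1$, $H_2$, or $C_6$ inside a graph $G$ would immediately produce an induced cycle in $G$ of length at least four, contradicting chordality. Hence a chordal $G$ is automatically $(H_1,H_2,C_6)$-free, and Theorem \ref{thm:main} applies verbatim to yield the stated equivalence for the $S(G)$-set $S$. I do not anticipate any real obstacle: the entire argument is a one-line invocation of the main theorem, preceded by the finite inspection of two small graphs described above.
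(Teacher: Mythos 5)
Your proof is correct and takes essentially the same route as the paper, whose entire argument is the one-line observation that chordal graphs contain no induced $H_1$, $H_2$ or $C_6$ followed by an invocation of Theorem \ref{thm:main}; you simply make that observation explicit by exhibiting an induced $C_5$ in $H_1$ and an induced $C_4$ in $H_2$. (One trivial wording slip: $H_1$ has four vertices of degree two, so the vertex you delete is better described as the common neighbour of the two chord endpoints, though ``on the left'' together with the figure makes your intent unambiguous.)
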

For chordal graphs,
both of the problems of finding the domination number and finding the total domination number are NP-complete
(see, \cite{booth1982} and \cite{laskar1984}, respectively.)
However,
constructing an $S(G)$-set and checking whether it is a packing and a dominating set can be easily done by an algorithm with polynomial time complexity.
Therefore, the problem of determining $\gamma_t (G)=2\gamma(G)$ for an $(H_1,H_2,C_6)$-free
(in particular, for a chordal graph) $G$ is polynomial time solvable.

By using the results of Theorem \ref{thm:main},
we next provide a characterization of another family of graphs $G$ with $\gamma_t(G)=2\gamma(G)$.
A  \emph{leaf} of a graph is a vertex with degree 1,
while a \emph{support vertex} of graph is a vertex adjacent to a leaf.
For the graph $K_2$, we assume that one vertex is leaf and the other one is support vertex.
Let $sup(G)$ denote the set of all support vertices in the graph $G$.

\begin{theorem}\label{thm:main2}
Let $G$ be a $(C_3,C_6)$-free graph.
Then, $\gamma_t (G)=2\gamma(G)$ if and only if $sup(G)$ is a packing and a dominating set of $G$.
\end{theorem}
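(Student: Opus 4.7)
The plan is to deduce Theorem~\ref{thm:main2} from the Main Theorem (Theorem~\ref{thm:main}). First I would observe that both $H_1$ and $H_2$ contain a triangle---the chord of the outer hexagon of $H_1$ creates one, and each chord of $H_2$ gives one---so every $(C_3,C_6)$-free graph is automatically $(H_1,H_2,C_6)$-free and Theorem~\ref{thm:main} applies. It therefore suffices to prove that for any $C_3$-free graph $G$ with no isolated vertices, the set $sup(G)$ is an $S(G)$-set; the equivalence in Theorem~\ref{thm:main2} will then follow immediately.

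The analysis rests on the elementary fact that in a triangle-free graph, adjacent vertices have disjoint open neighborhoods. Using this I would first describe $D(v)$: if $u \in D(v)$ then $u \sim v$ and $N[u] \subsetneq N[v]$, and since no common neighbor of $u$ and $v$ can exist we must have $N(u)=\{v\}$ and $\deg(v) \geq 2$. Hence $D(v)$ is precisely the set of leaf neighbors of $v$, and is nonempty exactly when $v$ is a support vertex of degree at least $2$. A parallel argument shows that two distinct true twins $u,v$ must form a $K_2$-component: they are adjacent, share no common neighbors by triangle-freeness, and have no private ones because $N[u]=N[v]$, so $N(u)=\{v\}$ and $N(v)=\{u\}$.

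I would then identify the special vertices of $G$ as follows. If $v$ is a support vertex of degree $\geq 2$ and $u \in M(v) \subseteq N(v)$, then no $w \in D(v) \subseteq N(v)$ can lie in $N(u)$ (else $u,v,w$ is a triangle), hence $D(v) \not\subseteq N(u)$ and $v$ is special. Both endpoints of a $K_2$-component have $D=M=\emptyset$ and are vacuously special. Every other vertex is non-special: a vertex $v$ of degree $\geq 2$ that is not a support has $D(v)=\emptyset$ and $M(v)=N(v)\neq\emptyset$ (no true twins or leaf neighbors by the above), so the condition $D(v)\subseteq N(u)$ is satisfied by every $u \in M(v)$; and a leaf $v$ outside a $K_2$-component similarly has $D(v)=\emptyset$ with its support in $M(v)$.

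Consequently, the twin classes of the special vertices are the singletons $\{v\}$ for each support vertex $v$ of degree $\geq 2$, together with the two-element classes formed by the endpoints of each $K_2$-component. The set $sup(G)$ contains each support vertex of degree $\geq 2$ and, by the convention for $K_2$, exactly one endpoint of each $K_2$-component, so it selects precisely one element from each twin class; hence $sup(G)$ is an $S(G)$-set, and Theorem~\ref{thm:main} yields the desired equivalence. The only delicate point is the case analysis identifying the special vertices, but this reduces to routine bookkeeping once one invokes $N(u) \cap N(v) = \emptyset$ for adjacent $u,v$.
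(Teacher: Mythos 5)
Your proposal is correct and follows essentially the same route as the paper: both $H_1$ and $H_2$ contain triangles, so Theorem~\ref{thm:main} applies, and the heart of the argument is showing via triangle-freeness that the special vertices are exactly the support vertices, so that $sup(G)$ is an $S(G)$-set. Your treatment is in fact slightly more careful than the paper's, which dispatches only the case $G=K_2$ and tacitly assumes every leaf's support lies in $M(v)$, whereas you explicitly handle $K_2$-components (whose endpoints are true twins and vacuously special) within a possibly disconnected $G$.
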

\begin{proof}
As both $H_1$ and $H_2$ contain $C_3$ as a subgraph,
by Theorem \ref{thm:main} it suffices to show that $sup(G)$ is an $S(G)$-set.
When $G=K_2$, the claim is trivial.
Suppose $G\neq K_2$.
Let $v$ be a leaf of $G$ and $u$ be the support vertex adjacent to $v$.
Then, $D(v)=\emptyset$ and $M(v)=\{u\}$; therefore, $v$ is not special.
Now let $v$ be a vertex with at least two neighbors.
Then, since $G$ has no $C_3$, neighbors of $v$ form an independent set and $v$ has no true twin.
Therefore, $T(v)=\{v\}$ and any neighbor of $v$ is either a leaf or has a neighbor which is not adjacent to $v$.
Thus, $D(v)$ is the set of leaves adjacent to $v$.
If $D(v)=\emptyset$, then $v$ is not special since $M(v)\neq \emptyset$.
If $D(v)$ is nonempty (i.e., $v$ is a support vertex), then $v$ is special since $v$ is the unique neighbor of a leaf in $D(v)$.
Then, clearly we see that $v$ is special if and only if $v$ is a support vertex and therefore,
$sup(G)$ is the unique $S(G)$-set.
\end{proof}

The girth of a graph $G$, denoted by $g(G)$, is the length of a shortest cycle (if any) in $G$.
Acyclic graphs (forests) are considered to have infinite girth.
Since $sup(G)=\emptyset$ for a graph $G$ with minimum degree at least 2, Theorem \ref{thm:main2} implies the following result.
\begin{corollary}\label{cor:girth}
Let $G$ be a $(\gamma_t,2\gamma)$-graph with $\delta(G)\geq 2$.
Then $G$ contains an induced $C_3$ or $C_6$ and hence, $g(G)\leq 6$.
\end{corollary}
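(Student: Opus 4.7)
The plan is a direct contrapositive using Theorem~\ref{thm:main2}. Suppose, for contradiction, that $G$ is a $(\gamma_t,2\gamma)$-graph with $\delta(G)\geq 2$ but $G$ contains neither an induced $C_3$ nor an induced $C_6$. Then $G$ is $(C_3,C_6)$-free, so Theorem~\ref{thm:main2} applies, and it tells us that $sup(G)$ must be both a packing and a dominating set of $G$.

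The key observation is the hypothesis $\delta(G)\geq 2$: no vertex of $G$ has degree $1$, so $G$ has no leaves, and consequently $sup(G)=\emptyset$. Since $G$ is nonempty (it has at least one vertex of degree $\geq 2$), the empty set cannot dominate $G$. This contradicts the conclusion of Theorem~\ref{thm:main2} and therefore our assumption must fail, so $G$ contains $C_3$ or $C_6$ as an induced subgraph.

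To conclude the girth bound, observe that an induced $C_3$ is itself a cycle of length $3$ in $G$, giving $g(G)\leq 3\leq 6$, and an induced $C_6$ is a cycle of length $6$ in $G$, giving $g(G)\leq 6$ directly. Either way, $g(G)\leq 6$.

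There is no real obstacle here; the only thing to be careful about is the trivial edge case that $G$ is nonempty, which is ensured by $\delta(G)\geq 2$ (so that ``$sup(G)$ is a dominating set'' genuinely fails for $sup(G)=\emptyset$), and that having an induced $C_6$ implies the existence of a cycle of length $6$ in $G$ (not merely a cycle of length at most $6$), which is immediate from the definition of an induced subgraph.
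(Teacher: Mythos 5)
Your proof is correct and matches the paper's argument exactly: the paper also derives this from Theorem~\ref{thm:main2} by noting that $\delta(G)\geq 2$ forces $sup(G)=\emptyset$, which cannot be a dominating set. Nothing further is needed.
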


\section{Proof of the Main Theorem}\label{sec:proof}

We first present a simple but useful observation which is also partially given in \cite{hou2010}.

\begin{lemma}\label{lem:dompack}
Let $G$ be a $(\gamma_t,2\gamma)$-graph and $S$ be a subset of $V(G)$.
Then, $S$ is a $\gamma$-set of $G$ if and only if $S$ is a packing and a dominating set of $G$.
\end{lemma}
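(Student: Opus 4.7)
The plan is to prove the two directions separately, with only the forward direction actually relying on the hypothesis that $G$ is a $(\gamma_t,2\gamma)$-graph.

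For the backward direction, suppose $S$ is both a packing and a dominating set. Since the closed neighborhoods $N[v]$ for $v\in S$ are pairwise disjoint, any dominating set $D$ of $G$ must intersect each $N[v]$ in at least one vertex, giving $|D|\geq |S|$. In particular $\gamma(G)\geq |S|$. Combined with $\gamma(G)\leq |S|$ (since $S$ itself dominates), we conclude $|S|=\gamma(G)$, so $S$ is a $\gamma$-set. Notice that this direction does not even invoke the hypothesis $\gamma_t(G)=2\gamma(G)$; I would mention this observation only briefly.

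For the forward direction, let $S$ be a $\gamma$-set; it is trivially a dominating set, so I only need to show it is a packing. I would argue by contradiction: assume there are distinct $u,v\in S$ with $N[u]\cap N[v]\neq\emptyset$. Two sub-cases arise. If $uv\in E(G)$, choose for each $x\in S\setminus\{u,v\}$ an arbitrary neighbor $t_x$ of $x$, and set $T=S\cup\{t_x : x\in S\setminus\{u,v\}\}$. Then every vertex of $G$ has a neighbor in $T$ (vertices outside $S$ are dominated through $S$ as before, while each $x\in S$ now has a neighbor in $T$: $u$ and $v$ are neighbors of each other, and each other $x$ has $t_x$), and $|T|\leq 2|S|-2$. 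Otherwise $u$ and $v$ share a common neighbor $w\notin\{u,v\}$; then $T=S\cup\{w\}\cup\{t_x : x\in S\setminus\{u,v\}\}$ is a total dominating set of size at most $2|S|-1$, since $w$ covers both $u$ and $v$ while being itself covered by $u$ (or $v$). In either case $\gamma_t(G)\leq |T|<2|S|=2\gamma(G)$, contradicting $\gamma_t(G)=2\gamma(G)$.

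There is no serious obstacle here; the only point requiring mild care is confirming that the constructed set $T$ really is a \emph{total} dominating set, i.e., that every vertex of $T$ (including the newly added $w$) has a neighbor inside $T$. I would verify this explicitly in the write-up. Once that is done, the two cases yield the desired strict inequality $\gamma_t(G)<2\gamma(G)$ and the lemma follows.
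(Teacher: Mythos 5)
Your proof is correct and follows essentially the same approach as the paper: the backward direction is identical, and the forward direction uses the same contradiction via an explicit total dominating set of size less than $2\gamma(G)$. The only cosmetic difference is that the paper merges your two sub-cases by taking $w$ to be any common vertex of the closed neighborhoods $N[v_1]\cap N[v_2]$ (which may be $v_2$ itself when $v_1v_2$ is an edge), whereas you treat adjacency and a shared neighbor separately.
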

\begin{proof}
Let $S=\{v_1,\dots, v_k \}$ be a $\gamma$-set in $G$.
Then, note that $\gamma_t(G)=2\gamma(G)=2k$ and it suffices to show that $S$ is a packing.
Suppose that $S$ is not a packing in $G$.
Then, without loss of generality, we may assume that $N[v_1]$ and $N[v_2]$ have a common vertex, say $w$.
Let $w_i \in N(v_i)$ for $i=3,\dots , k$.
It is easy to verify that $S\cup \{w,w_3,\dots,w_k \}$ is a total dominating set of $G$, and therefore,
by definition of $\gamma_t$, we get $\gamma_t(G)\leq 2k-1$, which contradicts with $\gamma_t(G)=2k$.

Now let $S=\{v_1,\dots, v_k \}$ be both a packing and a dominating set of $G$.
Then, we have $\gamma(G)\leq k$ since $S$ is a dominating set.
On the other hand, every dominating set of $G$ contains at least one vertex in $N[v_i]$ for $i=1,\dots,k$ and hence,
we get $\gamma(G)\geq k$, which implies that $\gamma(G)= k$.
Therefore, $S$ is a $\gamma$-set.
\end{proof}
Therefore, in every minimum dominating set of a graph $G$ with $\gamma_t(G)=2\gamma(G)$, any pair of vertices are nonadjacent and have no common neighbor.

We next provide a sufficient condition on a graph $G$ to satisfy $\gamma_t(G)=2\gamma(G)$.
\begin{lemma}\label{lem:sgpacdom}
Let $G$ be a graph and $S$ be an $S(G)$-set.
If $S$ is both a packing and a dominating set of $G$, then $G$ is a $(\gamma_t,2\gamma)$-graph.
\end{lemma}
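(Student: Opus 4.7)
The plan is to establish $\gamma_t(G)\ge 2|S|$; combined with the trivial bound $\gamma_t(G)\le 2\gamma(G)$ and the fact that $S$ is a $\gamma$-set, this forces the desired equality. To see that $\gamma(G)=|S|$, note that every dominating set must meet each closed neighbourhood $N[v]$ with $v\in S$; by the packing hypothesis these neighbourhoods are pairwise disjoint, so $\gamma(G)\ge|S|$, and the reverse inequality holds because $S$ itself dominates.

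The core claim is that for every total dominating set $T$ of $G$ and every $v\in S$,
\[
|T\cap N[v]|\ \ge\ 2.
\]
Granted this, pairwise disjointness of the $N[v]$ immediately yields $|T|\ge 2|S|$, which is exactly what is needed. To prove the claim I would fix $v\in S$, assume for contradiction that $|T\cap N[v]|\le 1$, and dispense with $T\cap N[v]=\emptyset$ at once (then $v$ has no neighbour in $T$). Otherwise write $T\cap N[v]=\{u\}$ and split on where $u$ lies in the partition $N[v]=T(v)\cup D(v)\cup M(v)$.

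In the case $u\in T(v)\cup D(v)$ one has $N[u]\subseteq N[v]$ (because $u$ shares its closed neighbourhood with $v$ or by the definition of $D(v)$), so $N(u)\cap T\subseteq (N[v]\setminus\{u\})\cap T=\emptyset$; hence either $u\in T$ has no neighbour in $T$, or, in the sub-subcase $u=v$, the vertex $v$ has no neighbour in $T$, both contradicting total domination. The remaining case $u\in M(v)$ is the main obstacle, and this is precisely where specialness of $v$ must be exploited: every $w\in D(v)$ satisfies $N(w)\subseteq N[v]$ by definition of $D(v)$, so the neighbour of $w$ in $T$ guaranteed by total domination must be the unique vertex $u$ of $T\cap N[v]$; this forces $D(v)\subseteq N(u)$ with $u\in M(v)$, contradicting the assumption that $v$ is special.
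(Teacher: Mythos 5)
Your proposal is correct and follows essentially the same route as the paper: establish $\gamma(G)=|S|$ from the packing/dominating hypotheses, then show every total dominating set meets each $N[v]$, $v\in S$, in at least two vertices by the same case split on $u\in T(v)\cup D(v)$ versus $u\in M(v)$, invoking specialness in the latter case. No gaps.
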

\begin{proof}
Suppose that the $S(G)$-set $S=\{v_1,\dots, v_k\}$ is a packing and a dominating set of $G$.
Then, $N[v_1],\dots, N[v_k]$ is a partition of $V(G)$.
Notice that every dominating set of $G$ should include at least one vertex from $N[v_i]$ and hence,
we get $\gamma(G)\geq k$.
On the other hand, $S$ is a dominating set with cardinality $k$ and therefore, $\gamma(G)=k$.

Let $A$ be a total dominating set of $G$.
Since $A$ dominates $v_i$, at least one member of $N(v_i)$ is in $A$ and hence, $|A\cap N[v_i]|\geq 1$ for $i=1,\dots,k$.
Assume that $A\cap N[v_i]=\{u\}$ for some $u$ and $i$.
If $u$ is in $T(v_i)\cup D(v_i)$, then there is no element in $A$ adjacent to $u$, contradiction.
Thus, $u$ is in $M(v_i)$.
Since $v_i$ is special, $D(v_i)$ is nonempty and $u$ is not adjacent to all the vertices in $D(v_i)$ and hence,
at least one vertex in $D(v_i)$ is not dominated by $A$, contradiction.
Consequently, we get $|A\cap N[v_i]|\geq 2$ for $i=1,\dots,k$ and therefore, $|A|\geq 2k$,
which implies that $\gamma_t(G)\geq 2k=2\gamma(G)$.
However, as $\gamma_t(G)\leq 2\gamma(G)$ for any graph with no isolated vertices, we obtain that $\gamma_t(G)= 2\gamma(G)$.
\end{proof}

However, the converse of the result in Lemma \ref{lem:sgpacdom} does not hold for every graph $G$ and hence,
extra conditions on $G$ are required to make the condition on $S$ necessary.
\begin{proposition}\label{prop:main}
Let $G$ be an $(H_1,H_2,C_6)$-free graph and $S$ be an $S(G)$-set.
If $\gamma_t(G)=2\gamma(G)$, then $S$ is both a packing and a dominating set of G.
\end{proposition}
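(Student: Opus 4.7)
I would proceed by contradiction, fixing a $\gamma$-set $D = \{v_1, \ldots, v_k\}$. By Lemma \ref{lem:dompack}, $D$ is both a packing and a dominating set, giving the disjoint partition $V(G) = \bigsqcup_{v \in D} N[v]$. My plan has two parts: first show that every $v \in D$ is special, and then deduce that $D$ and $S$ coincide up to swaps within true twin classes, a transformation preserving packing and domination.

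Granting the specialness of every $v \in D$, the second part is short. Packing forces $D$ to contain at most one vertex per twin class; for any special $v^* \notin D$ with unique dominator $u \in D \cap N[v^*]$, direct case analysis using the specialness of both $u$ and $v^*$ rules out $u \in D(v^*)$ and $u \in M(v^*)$, forcing $u \in T(v^*)$. This gives a bijection between $D$ and the twin classes of specials, so $|D| = |S|$; replacing each $v_i$ with the designated representative in $S$ (a true twin) yields $S$ as a packing and dominating set.

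The main challenge is showing every $v \in D$ is special. Suppose some $v \in D$ is not special, witnessed by $z \in M(v)$ with $D(v) \subseteq N(z)$. I would construct a total dominating set
\[
A = (D \setminus \{v\}) \cup \{z\} \cup \{n(v_i) : 2 \leq i \leq k\}
\]
of size at most $2|D|-1$, contradicting $\gamma_t(G) = 2|D|$. By choice of $z$ and packing, the subset $(D \setminus \{v\}) \cup \{z\}$ already totally dominates every vertex of $G$ except (a) each $v_i$ with $i \geq 2$, and (b) the set $B := \{z\} \cup (M(v) \setminus N[z])$, both of which must be covered by the $n(v_i)$'s. Each $m \in B$ has a neighbor outside $N[v]$ lying in $N(v_i) \setminus \{v_i\}$ for some $i \geq 2$ (by packing); assigning each such $m$ to one such index gives fibers $B_i$, and the task reduces to choosing, for each $i$ with $B_i \neq \emptyset$, a common neighbor $n(v_i) \in N(v_i)$ of all of $B_i$.

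The main obstacle will be establishing this common neighbor, by induction on $|B_i|$. The core sub-lemma is a six-cycle argument: if $m, m' \in B_i$ admit outer neighbors $y, y' \in N(v_i)$ with $y \neq y'$, $y \not\sim m'$, and $y' \not\sim m$, then $v, m, y, v_i, y', m'$ induce the hexagon $v - m - y - v_i - y' - m' - v$, since every potential non-hexagon edge is forbidden by packing, by $y$ and $y'$ being outer, or by our assumptions. The only possible chords, $m - m'$ and $y - y'$, both lie at hexagonal distance $2$, so the four configurations (no chord, either alone, or both) yield $C_6$, $H_1$, $H_1$, and $H_2$ respectively, each forbidden by hypothesis. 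Pairwise common neighbors in $N(v_i)$ therefore exist; the inductive step applies the same hexagon analysis to the inductively-obtained common neighbors $y_1, y_2 \in N(v_i)$ of $B_i \setminus \{m_1\}$ and $B_i \setminus \{m_2\}$, forcing either $y_1 = y_2$, $y_1 \sim m_1$, or $y_2 \sim m_2$, each yielding a common neighbor of all of $B_i$.
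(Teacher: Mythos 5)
Your proposal is correct and follows essentially the same route as the paper: reduce to a packing $\gamma$-set via Lemma \ref{lem:dompack}, derive a contradiction from a non-special vertex of that set by exhibiting a total dominating set of size $2k-1$, with the crucial common-neighbor step resting on exactly the same six-vertex analysis (no chord, one chord, two chords yielding $C_6$, $H_1$, $H_2$), followed by the same twin-class bookkeeping to pass from the $\gamma$-set to the $S(G)$-set. The only real difference is how the common neighbor is produced: the paper picks $w_i\in N(v_i)$ maximizing $|N(w_i)\cap N(v_1)|$ and uses the hexagon argument once to show every relevant vertex is adjacent to this single extremal choice, whereas you reach the same conclusion by induction on $|B_i|$; both are sound.
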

\begin{proof}
Suppose that $\gamma_t(G)=2\gamma(G)$.
Let $\{v_1,\dots ,v_k \}$ be a $\gamma$-set of $G$ (so, $\gamma(G)=k$).
Then, by Lemma \ref{lem:dompack} we obtain that $N[v_1], \dots,N[v_k]$ is a partition of $V(G)$.
In addition, note that every edge between $N[v_i]$ and $N[v_j]$ has endpoints one from $M(v_i)$ and one from $M(v_j)$ for every $i\neq j$.

We show that the set of all special vertices is $\cup_{i=1}^k T(v_i)$.
Without loss of generality, it suffices to prove that the set of special vertices in $N[v_1]$ is $T[v_1]$.

We first show that $v_1$ is a special vertex.
Suppose that $v_1$ is not special.
Then, the set $M(v_1)$ is nonempty and contains a vertex $u$ such that $D(v_1)\subseteq N(u)$.
Notice that in the case of $D(v_1)=\emptyset$ any member of $M(v_1)$ can be chosen to be $u$.
Note also that existence of such a vertex $u$ implies that $k\geq 2$.
Let $m=\max_{x\in N(v_2)} |N(x)\cap N(v_1)|$.
Then, for some $w_2\in N(v_2)$ and $u_1,\dots,u_m \in N(v_1)$,
$w_2$ is adjacent to $u_1,\dots ,u_m$ and has no other neighbor in $N(v_1)$.
In other words, $w_2$ is a neighbor of $v_2$ sharing the maximum number of neighbors with $v_1$ among $N(v_2)$.
We claim that if $y\in N(v_1)$ shares a neighbor with $v_2$, then $y$ is adjacent to $w_2$.
Assume the contrary.
Then, $y\neq u_i$ for $i=1,\dots,m$ and $y$ is adjacent to some $z\in N(v_2)$ such that $z\neq w_2$.
Now consider the subgraph of $G$ induced by $v_1, y, z,v_2,w_2$ and $u_i$ for some fixed $i$
(see Figure \ref{fig:C6}).
\begin{figure}\center
\begin{tikzpicture}[line cap=round,line join=round,>=triangle 45,x=1.0cm,y=1.0cm]
\clip(0.5,0.4908699361412032) rectangle (4.8,3.566359722231211);
\draw (1.,2.)-- (2.,3.);
\draw (2.,3.)-- (3.,3.);
\draw (3.,3.)-- (4.,2.);
\draw (4.,2.)-- (3.,1.);
\draw (3.,1.)-- (2.,1.);
\draw (2.,1.)-- (1.,2.);
\draw [dash pattern=on 1pt off 1pt](2.,1.)-- (2.,3.);
\draw [dash pattern=on 1pt off 1pt](3.,1.)-- (3.,3.);
\draw [dash pattern=on 1pt off 1pt](2.,3.)-- (3.,1.);
\draw (0.4,2.25) node[anchor=north west] {$v_1$};
\draw (4,2.25) node[anchor=north west] {$v_2$};
\draw (1.75,0.95) node[anchor=north west] {$y$};
\draw (2.75,0.95) node[anchor=north west] {$z$};
\draw (1.75,3.5) node[anchor=north west] {$u_i$};
\draw (2.75,3.5) node[anchor=north west] {$w_2$};
\begin{scriptsize}
\draw [fill=black] (1.,2.) circle (1.5pt);
\draw [fill=black] (2.,1.) circle (1.5pt);
\draw [fill=black] (2.,3.) circle (1.5pt);
\draw [fill=black] (3.,3.) circle (1.5pt);
\draw [fill=black] (3.,1.) circle (1.5pt);
\draw [fill=black] (4.,2.) circle (1.5pt);
\end{scriptsize}
\end{tikzpicture}
\caption{The subgraph of $G$ induced by $\{v_1, y, z,v_2,w_2,u_i\}$.
Dashed edges represent possible edges in $G$.
}
\label{fig:C6}
\end{figure}
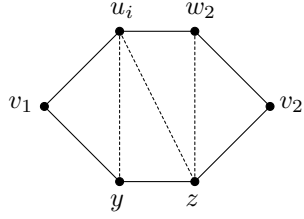
Note that in this graph $v_1, y, z,v_2,w$ and $u_i$ forms a $C_6$,
$v_1$ is adjacent to only $u_i$ and $y$, $v_2$ is adjacent to only $w_2$ and $z$,
and $y$ is not adjacent to $w_2$.
Suppose that $z$ is not adjacent to $u_i$.
Since $G$ has no induced $C_6$, $u_i$ and $y$ are adjacent or $w_2$ and $z$ are adjacent.
If exactly one of them occurs, then we have an induced $H_1$, and if both occurs, then we obtain an induced $H_2$.
Therefore, in any of the cases we get a contradiction and hence, $z$ is adjacent to $u_i$.
Thus, $z$ is adjacent to each $u_i$ together with $y$ and therefore, $z$ has at least $m+1$ neighbors in $N(v_1)$, which contradicts the maximality of $m$.
Thus, any element of $M(v_1)$ that is adjacent to a neighbor of $v_2$ is also adjacent to $w_2$.
Similarly, define $w_3\in N(v_3), \dots ,w_k \in N(v_k)$; i.e.,
$w_i$ is a vertex sharing the maximum number of neighbors with $v_1$ among the neighbors of $v_i$.
Then, any vertex in $M(v_1)$ which has a neighbor in $N(v_i)$, where $2\leq i \leq k$, is adjacent to $w_i$.
Note that by definition, every vertex in $M(v_1)$ has a neighbor in $N(v_i)$ for some $i\geq 2$, and therefore,
every element in $M(v_1)$ is adjacent to at least one vertex in $\{w_2,\dots, w_k \}$.
Then, the set $\{u,v_2,w_2,\dots, w_k,v_k \}$ is a total dominating set of $G$
because $u$ dominates $T(v_1)\cup D(v_1)$,
$\{w_2,\dots, w_k \}$ dominates $M(v_1)$, and
$\{w_i,v_i\}$ dominates $N[v_i]$ for $i=2,\dots,k$.
However, this total dominating set has cardinality $2k-1$, which contradicts the assumption that $\gamma_t(G) =2\gamma(G)$.
Therefore, we conclude that $v_1$ is special.

We next show that none of the vertices in $M(v_1)$ is special.
Let $u\in M(v_1)$.
For any $x\in N(v_i)$ with $i\geq 2$, we have $v_i\in N(x)$ and $v_i\notin N(u)$ and hence,
$x$ is not in $D(u)$.
Therefore, $D(u)$ is a subset of $N[v_1]\backslash \{u\}$.
Moreover, as $v_1$ is special, $u$ is not adjacent to at least one vertex of $D(v_1)$.
Thus, $v_1$ is not in $D(u)$ and so,
$D(u)$ is a proper subset of $N(v_1)$, which yields that $u$ is not special.

We then show that none of the vertices in $D(v_1)$ is special.
Let $u\in D(v_1)$.
Then, $N[u]$ is a proper subset of $N[v_1]$ and hence, $v_1$ is in neither $T(u)$ nor $D(u)$, that is, $v_1 \in M(u)$.
Therefore, $D(u) \subseteq N[u]\backslash \{v_1\} \subseteq N[v_1] \backslash \{v_1\} =N(v_1)$ and hence,
$u$ is not special.
Consequently, we obtain that all special vertices in $N[v_1]$ are members of $T(v_1)$.

By similar arguments for $v_2,\dots,v_k$, we see that the set of special vertices is $\cup_{i=1}^k T(v_i)$.
Then, recall that an $S(G)$-set $S$ is equal to $\{v_1',\dots,v_k' \}$, where $v_i' \in T(v_i)$ for $i=1,\dots,k$.
Since $\{v_1,\dots,v_k \}$ is both a packing and a dominating set of $G$, so is $S$ and hence, the desired result follows.
\end{proof}

By combining the results in Lemma \ref{lem:sgpacdom} and Proposition \ref{prop:main},
we obtain our main result given in Theorem \ref{thm:main}.

\begin{remark}\label{rem:G1G2}
Forbidden graphs $H_1,H_2$ and $C_6$ are best possible in the sense that
if one allows one of these three graphs, then the statement in Proposition \ref{prop:main} is no longer true.
For each case, we have the following counterexamples.
Let $G_1$ and $G_2$ be the graphs presented in Figure \ref{fig:G1G2}.
Clearly, $G_1$ has an induced $H_1$ and is $(H_2,C_6)$-free, $\gamma_t(G_1)=2\gamma(G_1)=4$; however, $v$ is the unique special vertex in $G_1$.
Similarly, $G_2$ contains an induced $H_2$ and is $(H_1,C_6)$-free, and it is easy to verify that $\gamma_t(G_2)=2\gamma(G_2)=6$;
however, $G_2$ has two special vertices, $v_1$ and $v_2$.
Finally, $C_6$ is $(H_1,H_2)$-free, $\gamma_t(C_6)=2\gamma(C_6)=4$; however, $C_6$ has no special vertices.
\end{remark}
\begin{figure}\center
\begin{tikzpicture}[line cap=round,line join=round,>=triangle 45,x=1.0cm,y=1.0cm]
\clip(-0.1,0.26) rectangle (14.18,3.48);
\draw (1.,2.)-- (2.,3.);
\draw (2.,3.)-- (3.,3.);
\draw (3.,3.)-- (4.,2.);
\draw (4.,2.)-- (3.,1.);
\draw (3.,1.)-- (2.,1.);
\draw (2.,1.)-- (1.,2.);
\draw (2.,1.)-- (2.,3.);
\draw (0.,2.)-- (1.,2.);
\draw (6.,2.)-- (7.,2.);
\draw (7.,2.)-- (8.,3.);
\draw (8.,3.)-- (9.,3.);
\draw (9.,3.)-- (10.,2.);
\draw (10.,2.)-- (9.,1.);
\draw (9.,1.)-- (8.,1.);
\draw (8.,1.)-- (7.,2.);
\draw (8.,3.)-- (8.,1.);
\draw (9.,3.)-- (9.,1.);
\draw (10.,2.)-- (11.,3.);
\draw (11.,3.)-- (11.,1.);
\draw (11.,1.)-- (10.,2.);
\draw (11.,3.)-- (12.,3.);
\draw (12.,3.)-- (12.,1.);
\draw (12.,1.)-- (11.,1.);
\draw (12.,1.)-- (13.,2.);
\draw (13.,2.)-- (14.,2.);
\draw (12.,3.)-- (13.,2.);
\draw (0.7,2.5) node[anchor=north west] {$v$};
\draw (6.6,2.5) node[anchor=north west] {$v_1$};
\draw (12.75,2.5) node[anchor=north west] {$v_2$};
\draw (1.65,0.9) node[anchor=north west] {$G_1$};
\draw (9.65,0.88) node[anchor=north west] {$G_2$};
\begin{scriptsize}
\draw [fill=black] (1.,2.) circle (1.5pt);
\draw [fill=black] (2.,1.) circle (1.5pt);
\draw [fill=black] (2.,3.) circle (1.5pt);
\draw [fill=black] (3.,3.) circle (1.5pt);
\draw [fill=black] (3.,1.) circle (1.5pt);
\draw [fill=black] (4.,2.) circle (1.5pt);
\draw [fill=black] (0.,2.) circle (1.5pt);
\draw [fill=black] (6.,2.) circle (1.5pt);
\draw [fill=black] (7.,2.) circle (1.5pt);
\draw [fill=black] (8.,3.) circle (1.5pt);
\draw [fill=black] (8.,1.) circle (1.5pt);
\draw [fill=black] (9.,1.) circle (1.5pt);
\draw [fill=black] (9.,3.) circle (1.5pt);
\draw [fill=black] (10.,2.) circle (1.5pt);
\draw [fill=black] (11.,3.) circle (1.5pt);
\draw [fill=black] (11.,1.) circle (1.5pt);
\draw [fill=black] (12.,1.) circle (1.5pt);
\draw [fill=black] (12.,3.) circle (1.5pt);
\draw [fill=black] (13.,2.) circle (1.5pt);
\draw [fill=black] (14.,2.) circle (1.5pt);
\end{scriptsize}
\end{tikzpicture}
\caption{The graphs $G_1$ and $G_2$.
Notice that $v$ is the unique special vertex of $G_1$ and the special vertices in $G_2$ are only $v_1$ and $v_2$.}
\label{fig:G1G2}
\end{figure}
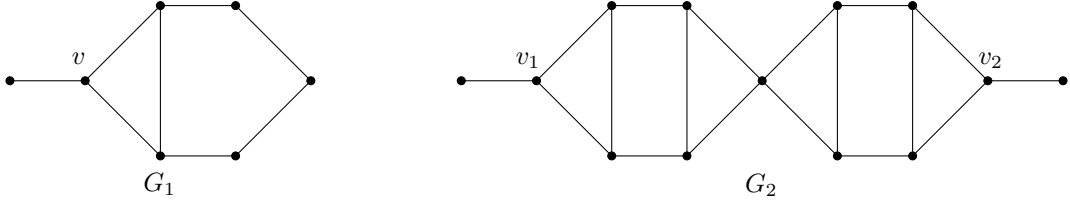

In the proof of Proposition \ref{prop:main} we actually show that if
$G$ contains no $H_1, H_2$ or $C_6$ as induced subgraphs and satisfies $\gamma_t(G)=2\gamma(G)$,
then every $\gamma$-set of $G$ is an $S(G)$-set and every $S(G)$-set is both a packing and a dominating set of $G$.
Combining this result with the one in Lemma \ref{lem:dompack} yields the following corollary.
\begin{corollary}\label{cor:nummindomset}
Let $G$ be an $(H_1,H_2,C_6)$-free $(\gamma_t,2\gamma)$-graph and $S$ be an $S(G)$-set.
Then, the number of $\gamma$-sets of $G$ is the number of $S(G)$-sets, which is
\begin{align*}
\prod_{v\in S} |T(v)|.
\end{align*}
In particular, $G$ has a unique $\gamma$-set if and only if no special vertex has a true twin.
\end{corollary}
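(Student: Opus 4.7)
The plan is to show that the $\gamma$-sets of $G$ are \emph{exactly} the $S(G)$-sets, and then to count the latter directly. The preceding paragraph of the paper already hands us half of this bijection: in the proof of Proposition \ref{prop:main} it was observed that if $G$ is $(H_1,H_2,C_6)$-free and $\gamma_t(G)=2\gamma(G)$, then for any $\gamma$-set $\{v_1,\dots,v_k\}$ the set of special vertices of $G$ is precisely $\bigcup_{i=1}^{k}T(v_i)$, so every $\gamma$-set is an $S(G)$-set.

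For the reverse inclusion, I would invoke Theorem \ref{thm:main}: since $G$ is a $(\gamma_t,2\gamma)$-graph, the $S(G)$-set $S$ is both a packing and a dominating set of $G$. But this property is preserved when we swap the representative of a true-twin class, because true twins have the same closed neighborhood; hence \emph{every} $S(G)$-set $S'$ is also both a packing and a dominating set. Lemma \ref{lem:dompack} (applied in the direction that a packing-and-dominating set is a $\gamma$-set) then gives that $S'$ is a $\gamma$-set of $G$. Combined with the first half, this establishes a set-theoretic equality between the family of $\gamma$-sets and the family of $S(G)$-sets.

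With the identification in hand, counting is immediate. The special vertices of $G$ partition into true-twin equivalence classes, and by the description above these classes are exactly the sets $T(v)$ for $v\in S$. An $S(G)$-set is formed by picking one representative from each class independently, so the number of such sets is
\[
\prod_{v\in S}|T(v)|.
\]
The ``in particular'' statement is then a triviality: this product equals $1$ iff every factor equals $1$, i.e.\ iff no special vertex has a true twin.

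I do not expect a genuine obstacle here, since the substantive work has already been carried out in Lemma \ref{lem:dompack}, Lemma \ref{lem:sgpacdom}, and Proposition \ref{prop:main}. The only point that deserves a sentence of care is the observation that being a packing-and-dominating set is invariant under replacing a vertex by one of its true twins; this is the bridge that lets us upgrade ``the particular $S(G)$-set $S$'' from the hypothesis of Theorem \ref{thm:main} to ``every $S(G)$-set'', which is what the counting argument requires.
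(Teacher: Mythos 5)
Your proposal is correct and follows essentially the same route as the paper: the paper also obtains the identification of $\gamma$-sets with $S(G)$-sets by combining the observation from the proof of Proposition \ref{prop:main} (that the special vertices are exactly $\bigcup_i T(v_i)$ for any $\gamma$-set $\{v_1,\dots,v_k\}$, and that every $S(G)$-set is a packing and a dominating set) with Lemma \ref{lem:dompack}, and then counts representatives of the true-twin classes. Your extra twin-swapping remark is harmless but not really needed, since Theorem \ref{thm:main} (equivalently, the closing lines of the proof of Proposition \ref{prop:main}) already applies to an arbitrary $S(G)$-set.
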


\section{Related Work}\label{sec:relwork}

\cite{henning2001} provided a characterization of $(\gamma_t,2\gamma)$-trees,
which was then updated to the following result by \cite{yeo2013}.
\begin{theorem}[Theorem 4.8 in \cite{yeo2013}]\label{thm:henningtrees}
A tree $T$ of order at least 3 satisfies $\gamma_t(T)=2\gamma(T)$ if and only if
$T$ has a dominating set $S$ such that the following two conditions hold:\\
(a) Every vertex of $S$ is a support vertex of $T$.\\
(b) The set $S$ is a packing in $T$.
\end{theorem}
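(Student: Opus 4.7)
The plan is to deduce this statement as an almost immediate corollary of Theorem~\ref{thm:main2}. A tree is acyclic and therefore in particular $(C_3,C_6)$-free, so Theorem~\ref{thm:main2} applies and gives
\[
\gamma_t(T)=2\gamma(T) \iff sup(T) \text{ is a packing and a dominating set of } T.
\]
Thus the whole task reduces to showing that, for a tree $T$ of order at least $3$, the condition ``$sup(T)$ is a packing and dominating set'' is equivalent to the existence of a dominating set $S$ satisfying (a) and (b).

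For the easy direction of this equivalence, I would simply take $S=sup(T)$: by construction every element of $S$ is a support vertex, and by hypothesis $S$ is a packing and a dominating set. For the converse, suppose $S$ is a dominating set satisfying (a) and (b); the claim I would prove is that $S=sup(T)$, after which the packing/dominating property transfers to $sup(T)$ and Theorem~\ref{thm:main2} finishes the argument. The inclusion $S\subseteq sup(T)$ is immediate from (a). For the reverse inclusion, let $u\in sup(T)$ and let $v$ be a leaf adjacent to $u$. Because $T$ has order at least $3$, $v$ is not a support vertex (its only neighbor is $u$, and $u$ is not itself a leaf, for otherwise $T\cong K_2$), so $v\notin S$; but $S$ dominates $v$, and $v$'s only neighbor is $u$, so $u\in S$.

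The only point requiring any care is the short argument that, in a tree of order at least $3$, a packing dominating set consisting entirely of support vertices is forced to contain \emph{every} support vertex; once this is in place, Theorem~\ref{thm:main2} does the real work. There is no substantive obstacle — indeed, the value of this route is precisely that a previously known characterization of $(\gamma_t,2\gamma)$-trees drops out as a clean special case of the general framework developed in the paper.
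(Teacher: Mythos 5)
Your proposal is correct and follows the paper's own route: the paper likewise derives this theorem from Theorem~\ref{thm:main2} (trees being $(C_3,C_6)$-free) after observing that a dominating subset of $sup(T)$ must equal $sup(T)$, which is exactly the leaf-domination argument you spell out. No issues.
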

Note that a subset $S \subseteq sup(T)$ is a dominating set of $T$ if and only if $S=sup(T)$.
Therefore, Theorem \ref{thm:henningtrees} can be restated as follows:
$T$ is a $(\gamma_t,2\gamma)$-graph if and only if
$sup(T)$ is a dominating set and a packing of $T$, which is a corollary of Theorem \ref{thm:main2} since every
tree is $(C_3,C_6)$-free.

We next present another classification of $(\gamma_t,2\gamma)$-graphs, provided by \cite{hou2010}, and show that it is a corollary of Theorem \ref{thm:main}.
Let $G$ be a connected graph with no isolated vertex.
A vertex $v$ in $G$ is called a \emph{cut-vertex} if removing $v$ from the graph produces a disconnected graph.
A maximal connected subgraph with no cut-vertex is called a \emph{block}.
Note that a block of $G$ is either a maximal 2-connected subgraph of $G$ or a $K_2$.
The graph $G$ is called a \emph{block graph} if every block of $G$ is a complete graph.

The characterization in \cite{hou2010} is based on two sets $D_1$ and $D_2$.
$D_1$ is defined to be the set of cut-vertices which constitute a unique cut-vertex in a block of $G$, and
$D_2$ denotes the set of cut-vertices that have at least two neighbors which are not cut-vertices and belong to different blocks of $G$.
\begin{theorem}[Theorem 4 in \cite{hou2010}] \label{thm:hou}
Let $G$ be a connected block graph with at least two blocks.
Then $\gamma_t(G)=2\gamma(G)$ if and only if $G$ satisfies the following three conditions:\\
(1) $G$ has a unique minimum dominating set $D$,\\
(2) $D=D_1 \cup D_2$,\\
(3) $D$ is a packing in $G$.
\end{theorem}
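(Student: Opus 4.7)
My plan is to apply Theorem~\ref{thm:main} (block graphs are chordal, hence $(H_1,H_2,C_6)$-free) after identifying the $S(G)$-set of a connected block graph $G$ with at least two blocks with the set $D_1\cup D_2$ from the statement. Once this identification is made, condition (3) is precisely the packing hypothesis of Theorem~\ref{thm:main}, while (1) together with (2) expresses that $D_1\cup D_2$ is the unique $\gamma$-set of $G$, which will follow by combining Corollary~\ref{cor:chordal} with Corollary~\ref{cor:nummindomset}.

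I would first record the structure of true twins in a block graph: because two distinct blocks share at most one vertex, $N[u]=N[v]$ holds iff $u$ and $v$ lie in a common block and are both non-cut vertices of $G$; in particular, no cut-vertex has a true twin. For a non-cut vertex $v$ in a block $B$, every other vertex of $B$ is either a true twin of $v$ or a cut-vertex with strictly larger closed neighborhood, so $D(v)=\emptyset$ while $M(v)$ equals the set of cut-vertices of $B$; since $G$ is connected with at least two blocks, $B$ contains a cut-vertex, so $M(v)\neq\emptyset$ and $v$ is not special. Thus every special vertex is a cut-vertex, and so has no true twin.

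The main step is a case analysis on a cut-vertex $v$ lying in blocks $B_1,\dots,B_r$ ($r\geq 2$), for which $T(v)=\{v\}$, $D(v)$ is the set of non-cut neighbors of $v$, and $M(v)$ is the set of its cut-vertex neighbors. The pivotal observation is that any $u\in M(v)$ lies in a single $B_i$ (two distinct blocks share at most one vertex and $u\neq v$), so $u$ is adjacent to no non-cut vertex of $B_j$ with $j\neq i$; hence $D(v)\subseteq N(u)$ is possible only when all non-cut neighbors of $v$ live in that single block $B_i$. Setting $I=\{i : B_i \text{ contains a non-cut vertex of } G\}$, I read off: if $|I|\geq 2$ then $v$ is special and $v\in D_2$; if $|I|=1$, say $I=\{i\}$, then $v$ is special iff $B_i$ contains no cut-vertex besides $v$, i.e.\ iff $v\in D_1$; if $|I|=0$ then $D(v)=\emptyset$ while every block of $v$ consists entirely of cut-vertices and thus supplies a cut-vertex neighbor, so $M(v)\neq\emptyset$ and $v$ is not special, and plainly $v\notin D_1\cup D_2$. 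Therefore the set of special vertices of $G$ is exactly $D_1\cup D_2$, which is the unique $S(G)$-set.

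With the $S(G)$-set identified, Corollary~\ref{cor:chordal} finishes the argument. For the forward direction, $\gamma_t(G)=2\gamma(G)$ forces $D_1\cup D_2$ to be a packing and a dominating set (giving (3) and the dominating half of (2)), and Corollary~\ref{cor:nummindomset} with $\prod_{v\in S}|T(v)|=1$ upgrades it to the unique $\gamma$-set, completing (1) and (2). Conversely, (2) and (3) state precisely that the $S(G)$-set $D_1\cup D_2$ is a packing and a dominating set, so Corollary~\ref{cor:chordal} returns $\gamma_t(G)=2\gamma(G)$. I expect the main obstacle to be the cut-vertex case analysis above, where the pigeonhole on the index set $I$ must be matched on the nose with the definitions of $D_1$ and $D_2$, with particular care in the transitional case $|I|=1$.
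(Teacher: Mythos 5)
Your proposal is correct and follows essentially the same route as the paper: both arguments reduce the theorem to Theorem~\ref{thm:main} by showing that in a connected block graph with at least two blocks the special vertices are exactly the cut-vertices in $D_1\cup D_2$ (your case analysis on the index set $I$ is the paper's case analysis on which sets $A_i$ of non-cut vertices are nonempty). The only cosmetic difference is that you invoke Corollary~\ref{cor:nummindomset} explicitly to recover the uniqueness condition (1), which the paper handles with a brief closing remark to the same effect.
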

We now show that $D$ is the unique $S(G)$-set, and hence, Theorem \ref{thm:main} implies Theorem \ref{thm:hou}.
First note that a vertex is a cut-vertex if and only if it belongs to at least two blocks.
Furthermore, since $G$ is connected and has at least two blocks, every block of $G$ has at least one cut-vertex.
Moreover, if $v$ is not a cut-vertex and $u$ is a cut-vertex in the block containing $v$,
then $N[v]$ is a proper subset of $N[u]$ since every block in $G$ is a clique.
Therefore, any special vertex is a cut-vertex.
Now let $v$ be a cut-vertex and $B_1,\dots, B_k$ be the blocks containing $v$.
Let $A_i$ be the set of vertices in $B_i$ which are not cut-vertices of $G$ for $i=1,\dots, k$.
It is easy to see that $v$ has no true twin and $D(v)=\cup_{i=1}^k A_i$.
If each $A_i$ is empty, then clearly $v$ is not special.
If only one $A_i$ is nonempty, suppose $A_1\neq \emptyset$.
Then $v$ is special if and only if $v$ is the unique cut-vertex of $B_1$, i.e.,
in such a case, $v$ is special if and only if $v\in D_1 \backslash D_2$.
Finally, if at least two of $A_i$s are nonempty (i.e., $v\in D_2$), then clearly $v$ is the unique vertex adjacent to every vertex in $D(v)$ and hence,
$v$ is special.
Therefore, the set of special vertices is $D=D_1 \cup D_2$ and it is the unique $S(G)$-set.
Consequently, Theorem \ref{thm:hou} is a corollary of Theorem \ref{thm:main}.
Besides, Theorem \ref{thm:main} implies that in Theorem \ref{thm:hou},
the condition on $D$ being the unique minimum dominating set of $G$ can be replaced by being a dominating set of $G$.

\section{Discussion and Conclusions}\label{sec:dis}
\cite{henning2009} presents a list of top fundamental problems on total domination in graphs.
Motivated by the problem of characterizing the graphs $G$ satisfying $\gamma_t(G)=2\gamma(G)$ in his list,
we study $(H_1,H_2,C_6)$-free graphs and provide a necessary and sufficient condition on them to be $(\gamma_t,2\gamma)$.
Thus, we extend the previous results on $(\gamma_t,2\gamma)$-graphs
(particularly, trees in \cite{henning2001} and block graphs in \cite{hou2010})
to a larger family of graphs including chordal graphs and $(C_3,C_6)$-free graphs.

In Lemma \ref{lem:sgpacdom}, we provide a sufficient condition for a graph to be a $(\gamma_t,2\gamma)$-graph.
We show that for any graph $G$ having an $S(G)$-set which is both a packing and a dominating set
forces $G$ to satisfy $\gamma_t(G)=2\gamma(G)$.
It also enables one to construct $(\gamma_t,2\gamma)$-graphs.
For example, let $G_1,\dots ,G_n$ and $G$ be pairwise disjoint arbitrary graphs such that $G$ has order $n$.
Let $V(G)=\{v_1,\dots, v_n\}$ and $u_1,\dots,u_n$ be new vertices.
Consider the graph $H$ obtained by connecting $u_i$ to $v_i$ and every vertex in $G_i$ for $i=1,\dots,n$.
It is easy to verify that the set of special vertices of $H$ is $\{u_1,\dots, u_n\}$,
which is both a dominating set and a packing of $H$, and hence, $H$ is a $(\gamma_t,2\gamma)$-graph.
Notice that if each $G_i$ is a singleton, then $H=G\circ P_2$.
By Corollary \ref{cor:girth}, we see that if the minimum degree of a $(\gamma_t,2\gamma)$-graph is 2, then its girth is at most 6.
However, since $g(G\circ P_2)=g(G)$, there is no bound on the girth of a $(\gamma_t,2\gamma)$-graph containing a leaf.

A $(\gamma_t,2\gamma)$-graph does not have to have special vertices (e.g., $C_6$).
However, by forbidding the graphs $H_1,H_2$ and $C_6$ in $G$, we see that the sufficient condition on $G$ to be a $(\gamma_t,2\gamma)$-graph is also necessary.
Thus, we obtain a characterization of $(H_1,H_2,C_6)$-free $(\gamma_t,2\gamma)$-graphs,
which allows one to solve the problem of determining whether a given $(H_1,H_2,C_6)$-free graph is $(\gamma_t,2\gamma)$ or not in polynomial time.
Recall also that forbidden graphs $H_1,H_2$ and $C_6$ are best possible in the sense that
allowing one of these three graphs avoids our main theorem being valid (see, Remark \ref{rem:G1G2}).

Notice that since the graphs $H_1$ and $H_2$ have triangles, we have the characterization of $C_6$-free $(\gamma_t,2\gamma)$-bipartite graphs.
Classifying all $(\gamma_t,2\gamma)$-bipartite graphs is a topic of ongoing research.
Another potential research direction is to investigate properties of $(\gamma_t,2\gamma)$-graphs in terms of other graph parameters.

\nocite{*}
\bibliographystyle{abbrvnat}
% use the following instead if you encounter problems
%\bibliographystyle{alpha}
%\begin{sloppypar}
\bibliography{references}{}
%\end{sloppypar}
\label{sec:biblio}

\end{document}